\title[The first Alexander {$\mathbb{Z}[\mathbb{Z}]$}--modules of
surface-links and of virtual links]{The first Alexander
$\mathbb{Z}[\mathbb{Z}]$--modules of surface-links\\and of virtual
links}
\author{Akio Kawauchi}
\address{Department of Mathematics\\Osaka City University\\\newline
Sugimoto\\Sumiyoshi-ku\\Osaka 558-8585\\Japan}
\email{kawauchi@sci.osaka-cu.ac.jp}
\urladdr{}
\def\cnewtheorem#1[#2]#3{\newtheorem{#1}{#3}[section]
\expandafter\let\csname c@#1\endcsname\c@Theorem}
\let\xysavmatrix\xymatrix
\def\xymatrix{\disablesubscriptcorrection\xysavmatrix}
\newtheorem{Theorem}{Theorem}[section]
\theoremstyle{remark}
\begin{document}

\begin{htmlabstract}
We characterize the first Alexander <b>Z</b>[<b>Z</b>]&ndash;modules
of ribbon surface-links in the 4&ndash;sphere fixing the number of
components and the total genus, and then the first Alexander
<b>Z</b>[<b>Z</b>]&ndash;modules of surface-links in the 4&ndash;sphere
fixing the number of components.  Using the result of ribbon
torus-links, we also characterize the first Alexander
<b>Z</b>[<b>Z</b>]&ndash;modules of virtual links fixing the number
of components.  For a general surface-link, an estimate of the total
genus is given in terms of the first Alexander
<b>Z</b>[<b>Z</b>]&ndash;module.  We show a graded structure on the
first Alexander <b>Z</b>[<b>Z</b>]&ndash;modules of all surface-links
and then a graded structure on the first Alexander
<b>Z</b>[<b>Z</b>]&ndash;modules of classical links, surface-links
and higher-dimensional manifold-links.
\end{htmlabstract}

\begin{abstract} 
We characterize the first Alexander $\mathbb{Z}[\mathbb{Z}]$--modules
of ribbon surface-links in the 4--sphere fixing the number of
components and the total genus, and then the first Alexander
$\mathbb{Z}[\mathbb{Z}]$--modules of surface-links in the 4--sphere
fixing the number of components.  Using the result of ribbon
torus-links, we also characterize the first Alexander
$\mathbb{Z}[\mathbb{Z}]$--modules of virtual links fixing the number
of components.  For a general surface-link, an estimate of the total
genus is given in terms of the first Alexander
$\mathbb{Z}[\mathbb{Z}]$--module.  We show a graded structure on the
first Alexander $\mathbb{Z}[\mathbb{Z}]$--modules of all surface-links
and then a graded structure on the first Alexander
$\mathbb{Z}[\mathbb{Z}]$--modules of classical links, surface-links
and higher-dimensional manifold-links.
\end{abstract}

\begin{asciiabstract}
We characterize the first Alexander Z[Z]-modules 
of ribbon surface-links in the 4-sphere fixing the number of components 
and the total genus, and then the first Alexander Z[Z]-modules 
of surface-links in the 4-sphere fixing the number of components. 
Using the result of ribbon torus-links,  we also characterize the first 
Alexander Z[Z]-modules of virtual links fixing the number of components. 
For  a general surface-link, an estimate of the total 
genus is given in terms of the first Alexander Z[Z]-module.  
We show  a graded structure on the first Alexander Z[Z]-modules 
of all surface-links and then a graded structure on 
the first Alexander Z[Z]-modules 
of classical links, surface-links and higher-dimensional manifold-links. 
\end{asciiabstract}

\maketitle

\section{The first Alexander $\Z[\Z]$--module of a surface-link}\label{sec1}

For every non-nagative partition $g=g_1+g_2+...+g_r$ of a non-negative
integer $g$, we consider a closed oriented 2--manifold $F=
F^r_g=F^r_{g_1,g_2,...,g_r}$ with $r$ components $F_i$ $(i=1,2,...,r)$
such that the genus $g(F_i)$ of $F_i$ is $g_i$. The integer $g$ is
called the {\it total genus} of $F$ and denoted by $g(F)$.  An
$F$--{\it link} $L$ is the ambient isotopy class of a locally-flatly
embedded image of $F$ into $S^4$, and for $r=1$ it is also called an
$F$--{\it knot}.  The {\it exterior} of $L$ is the compact 4--manifold
$E=S^4\backslash{\rm int}N(L)$, where $N(L)$ denotes the tubular
neighborhood of $L$ in $S^4$. Let $p\co \tilde E \to E$ be the infinite
cyclic covering associated with the epimorphism $\gamma\co  H_1(E)\to \Z$
sending every oriented meridian of $L$ in $H_1(E)$ to $1\in \Z$. An
$F$--link $L$ is {\it trivial} if $L$ is the boundary of the union of
disjoint handlebodies embedded locally-flatly in $S^4$.  A {\it
ribbon} $F$--{\it link} is an $F$--link obtained from a trivial
$F^r_0$--link by surgeries along embedded 1--handles in $S^4$ (see
Kawauchi, Shibuya and Suzuki \cite[page 52]{KSS}). When we put the
trivial $F^r_0$--link in the equatorial 3--sphere $S^3\subset S^4$, we
can replace the 1--handles by mutually disjoint $1$--handles embedded in
the 3--sphere $S^3$ without changing the ambient isotopy class of the
ribbon $F$--link by an argument of \cite[Lemma 4.11]{KSS} using a
result of Hosokawa and Kawauchi \cite[Lemma 1.4]{HK}.  Thus, every ribbon $F$--link is
described by a {\it disk--arc presentation} consisting of oriented
disks and arcs intersecting the interiors of the disks transversely in
$S^3$ (see \fullref{Link} for an illustration), where the oriented
disks and the arcs represent the oriented trivial 2--spheres and the
1--handles, respectively.

\begin{figure}[hbtp]
	\centering
	\includegraphics[clip,height=3cm]{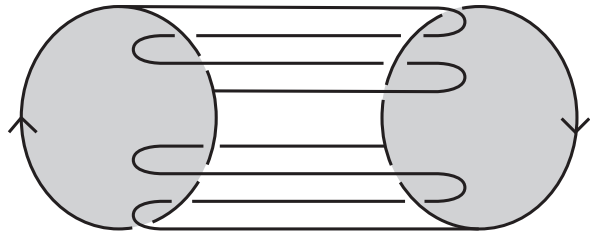}
	\caption{A ribbon $F^2_{1,1}$--link}
    \label{Link}
\end{figure}

Let $\Lambda= \Z[\Z] = \Z[t,t^{-1}]$ be  the integral Laurent 
polynomial ring. The homology $H_*(\tilde E)$ is a finitely 
generated $\Lambda$--module. Specially,  the first homology 
$H_1(\tilde E)$ is called the {\it first Alexander} $\Z[\Z]$--{\it module}, 
or simply 
the {\it module} of an $F$--link $L$ and denoted by $M(L)$. 
In this paper, we discuss the following problem:

\begin{Problem}\label{1.1}Characterize  the modules $M(L)$ of 
$F^r_g$--links  $L$ in a topologically meaningful class. 
\end{Problem}

In \fullref{sec2}, we discuss some homological properties of
$F^r_g$--links.  Fixing $r$ and $g$, we shall solve \fullref{1.1} for
the class of ribbon $F^r_g$--links in \fullref{sec3}.  We also solve
\fullref{1.1} for the class of all $F^r_g$--links not fixing $g$ as a
collorary of the ribbon case in \fullref{sec3}.  In \fullref{sec4}, we
characterize the first Alexander $\Z[\Z]$--modules of virtual links by
using the characterization of ribbon $F^r_{1,1,...,1}$--links.  In
\fullref{sec5}, we show a graded structure on the first Alexander
$\Z[\Z]$--modules of all $F^r_g$--links by establishing an estimate of
the total genus $g$ in terms of the first Alexander $\Z[\Z]$--module
of an $F^r_g$--link.  In fact, we show that there is the first
Alexander $\Z[\Z]$--module of an $F^r_{g+1}$--link which is not the
first Alexander $\Z[\Z]$--module of any $F^r_g$--link for every $r$
and $g$. In \fullref{sec6}, we show a graded structure on the first
Alexander $\Z[\Z]$--modules of classical links, surface-links and
higher-dimensional manifold-links.  We mention here that most results
of this paper are announced in \cite{KaAnnounce} without proofs.  A
group version of this paper is given in \cite{Ka''''}.

\section{Some homological properties on surface-links}\label{sec2}

The following computation on the homology $H_*(E)$ of the exterior 
$E$ of an $F^r_g$--link $L$ is done by using the Alexander duality for 
$(S^4,L)$:

\begin{Lemma}\label{2.1}
$$H_d(E)=\begin{cases}
\ \Z^{r-1} & \qquad(d=3)\\
\ \Z^{2g} &  \qquad(d=2)\\
\ \Z^r &  \qquad(d=1)\\
\ \Z &  \qquad(d=0)\\
\ 0 & \qquad(d\ne 0,1,2,3).
\end{cases}$$
\end{Lemma}

For a finitely generated $\Lambda$--module $M$, let $TM$ be 
the $\Lambda$--torsion part, and $BM=M/TM$ the $\Lambda$--torsion-free part. 
Let $\beta(M)$ be the $\Lambda$--rank of the module $M$, namely 
the $Q(\Lambda)$--dimension of the $Q(\Lambda)$--vector space 
$M\otimes_{\Lambda}Q(\Lambda)$, where $Q(\Lambda)$ denotes the quotient 
field of $\Lambda$.
Let 
\[ DM=\{ x\in M\mid \exists f_i\in\Lambda (i=1,2,...,s(\geqq 2))\,\,
\mbox{with \,\, $(f_1,...,f_s)=1$\,\, and \,\,  $f_i x=0$} \}, \] 
which is the maximal finite $\Lambda$--submodule of $M$ (cf Kawauchi \cite[Section 3]{Ka}), 
where the notation $(f_1,...,f_s)$ denotes the greatest common divisor of 
the Laurent polynomials $f_1,...,f_s$. We note that $DM$ contains all finite 
$\Lambda$--submodules of $M$, which is a consequence of $M$ being finitely 
generated over $\Lambda$. 
Let $T_DM=TM/DM$, and $E^qM=Ext^q_{\Lambda}(M,\Lambda)$.   
The following proposition is more or less  known 
(see J Levine \cite{Le} for  $S^n$--knot modules and  
\cite{Ka} in general):

\begin{Proposition}\label{2.2}
We have the following properties 
(1)--(5) on a finitely generated $\Lambda$--module $M$. 

\begin{enumerate}

\item $E^0M = {\rm hom}_{\Lambda}(M,\Lambda) = \Lambda^{\beta(M)}$, 

\item $E^1M=E^2M=0$ if and only if $M$ is $\Lambda$--free,

\item there are natural $\Lambda$--exact sequences 
$0\rightarrow E^1BM \rightarrow E^1M \rightarrow E^1TM \rightarrow 0$ and 
 $0\rightarrow BM\rightarrow E^0E^0BM \rightarrow E^2E^1BM\rightarrow 0,$

\item $E^1BM =DE^1M$,

\item $E^1TM={\rm hom}_{\Lambda}(TM,Q(\Lambda)/\Lambda)$ and 
$E^2M = E^2DM={\rm hom}_\Z(DM,\Q/\Z)$. 
\end{enumerate}
\end{Proposition}

The $d^{\text{\it th}}$\,$\Lambda$--{\it rank} of an $F^r_g$--link $L$
is the number $\beta_d(L)=\beta(H_d(\tilde E))$. We call the integer
$\tau(L)=r-1-\beta_1(L)$ the {\it torsion-corank} of $L$, which is
shown to be non-negative in \fullref{2.5}.  We use the following notion:

\begin{Definition}\label{2.3}
A finitely generated $\Lambda$--module $M$
is a {\it cokernel-free} $\Lambda$--module of {\it corank} $n$  
if there is an isomorphism 
$M/(t-1)M\cong \Z^n$ as abelian groups.
\end{Definition}

The corank of a cokernel-free $\Lambda$--module $M$ is denoted by $cr(M)$. 
We shall show in \fullref{3.3}  that  a $\Lambda$--module $M$  is 
a cokernel-free $\Lambda$--module of corank  $n$ if and only if 
there is an $F^{n+1}_g$--link $L$  for some $g$ such that $M(L)=M$. 
The following lemma implies that the cokernel-free $\Lambda$--modules   
appear naturally in the homology of an infinite cyclic covering:

\begin{Lemma}\label{2.4}
Let $p\co \tilde X\to X$ be an infinite cyclic covering 
over a finite  complex $X$. 
If $H_d(X)$ is free abelian, then  the $\Lambda$--modules 
$H_d(\tilde X)$, $TH_d(\tilde X)$ and $T_DH_d(\tilde X)$ are cokernel-free 
$\Lambda$--modules. In particular, if $H_1(X)\cong \Z^r$ and $\tilde X$ is 
connected, then $H_1(\tilde X)$ is cokernel-free of corank $r-1$. 
\end{Lemma}

\begin{proof}  
By  Wang exact sequence, the sequence  
$$H_d(\tilde X) \overset{t-1}{\rightarrow} H_d(\tilde X) 
\overset{p_*}{\rightarrow}H_d(X)
\overset{\partial}{\rightarrow}H_{d-1}(\tilde X)$$ 
is exact, which also induces an exact sequence
$$TH_d(\tilde X) \overset{t-1}{\rightarrow} TH_d(\tilde X) 
\overset{p_*}{\rightarrow}H_d(X),$$
for $(t-1)TH_d(\tilde X)=TH_d(\tilde X)\cap(t-1)H_d(\tilde X)$. Since $H_d(X)$ is free abelian, 
we have also the induced exact sequence 
$$T_DH_d(\tilde X) \overset{t-1}{\rightarrow} T_DH_d(\tilde X) 
\overset{p_*}{\rightarrow}H_d(X),$$ 
obtaining the desired result of the first half. 
The second half follows from the calculation that 
$$\text{im}[p_*\co  H_1(\tilde X)\to H_1(X)]
=\text{ker}[\partial\co  H_1(X)\to H_0(\tilde X)]\cong \Z^{r-1}.\proved$$
\end{proof}

From Lemmas \ref{2.1} and \ref{2.4}, we see that the $\Lambda$--modules  
$H_*(\tilde E)$, $TH_*(\tilde E)$ and\break $T_DH_*(\tilde E)$ are all 
cokernel-free $\Lambda$--modules for every $F^r_g$--link $L$. 
On these $\Lambda$--modules, we make the following calculations by using 
the dualities on the homology $H_*(\tilde E)$ in \cite{Ka}:

\begin{Lemma}\label{2.5} \ 

\begin{enumerate}
\item $\beta_1(L)=\beta_3(L)\leqq r-1$ and $\beta_2(L)=2(g-\tau(L))$, 

\item  $H_d(\tilde E)=0$ for $d\ne 0,1,2,3$, 
     $H_0(\tilde E)\cong \Lambda/(t-1)\Lambda$ and 
	 $H_3(\tilde E)\cong \Lambda^{\beta_1(L)}$,  

\item $cr(M(L))=r-1$ and $cr(TM(L))=cr(T_DM(L))=\tau(L)$, 
	
\item $cr(H_2(\tilde E))=2g-\tau(L)$ and 
	$cr(TH_2(\tilde E))=cr(T_DH_2(\tilde E))=\tau(L)$. 
\end{enumerate}
\end{Lemma}

\begin{proof}Since the covering $\partial \tilde E\to \partial E$ 
is equivalent to the product covering $F\times R\to F\times S^1$, we see that 
$H_*(\partial \tilde E)$ is a torsion $\Lambda$--module. Then  
the zeroth duality of \cite{Ka} implies $\beta_1(L)=\beta_3(L)$. 
The second duality of \cite{Ka} implies $E^2(H_3(\tilde E))=0$
 and the first duality of \cite{Ka} implies $E^1(H_3(\tilde E))=0$, 
 meaning that 
 $H_3(\tilde E)$ is a $\Lambda$--free module of $\Lambda$--rank $\beta_1(L)$. Since 
 $H_3(E)=\Z^{r-1}$, the Wang exact sequence implies $\beta_3(L)\leqq r-1$. 
 $H_k(\tilde E)=0$ for $k\ne 0,1,2,3$ and 
 $H_0(\tilde E)\cong \Lambda/(t-1)\Lambda$ are obvious. 
By \fullref{2.1}, the Euler characteristic $\chi(\tilde E;Q(\Lambda))$ 
of the $Q(\Lambda)$--homology $H_*(\tilde E;Q(\Lambda))$ is calculated 
as follows:  
$$\chi(\tilde E;Q(\Lambda))= -2\beta_1(L) + \beta_2(L) 
                           = \chi(E)= 2-\chi(F)= 2-2r+2g.$$
Hence we have $\beta_2(L)=2(g(F)-\tau(L))$, and (1) and (2) are 
proved. To see (3), 
the Wang exact sequence induces a short exact sequence 
\[ 0\rightarrow M(L)/(t-1)M(L)\rightarrow \Z^r\rightarrow \Z\rightarrow 0, \]
showing that $M(L)/(t-1)M(L)\cong \Z^{r-1}$ and $cr(M(L))=r-1$. 
Let $TM(L)/(t-1)TM(L)\cong \Z^s$ by \fullref{2.4}. 
Then we see that $BM(L)/(t-1)BM(L)$ has the $\Z$-rank $r-1-s$ 
by considering in the principal ideal domain 
$\Lambda_\Q=\Q[\Z]=\Q[t, t^{-1}]$ (although it may have a non-trivial integral 
torsion). This $\Z$--rank is also equal to $\beta_1(L)$, 
because $BM_\Q=BM\otimes_{\Lambda}\Lambda_\Q\cong \Lambda_\Q^{\beta_1(L)}$ and 
hence $BM_\Q/(t-1)BM_\Q\cong \Q^{\beta_1(L)}$. Thus, 
$$cr(TM(L))=s=r-1-\beta_1(L)=\tau(L).$$
Since $cr(TM(L))=cr(T_DM(L))$ is obvious, we have (3).  
To see (4), let 
$H_2(\tilde E)/(t-1)H_2(\tilde E)\cong \Z^u$ by \fullref{2.4}. 
Since $H_2(E)=\Z^{2g}$ by \fullref{2.1}, 
the kernel of $t-1\co TH_1(\tilde E)\to TH_1(\tilde E)$ has the 
$\Z$--rank $2g-u$, which is equal to the $\Z$--rank $\tau(L)$ of the cokernel of 
$t-1\co TH_1(\tilde E)\to TH_1(\tilde E)$ by considering it over $\Lambda_\Q$. 
Thus, $cr(H_2(\tilde E))=u=2g-\tau(L)$. Next, let 
$TH_2(\tilde E)/(t-1)TH_2(\tilde E)\cong \Z^v$. Then  
$BH_2(\tilde E)/(t-1)BH_2(\tilde E)$ has the $\Z$--rank $u-v$. Since 
$\beta_2(L)=2(g-\tau(L))$, we have $u-v=2(g-\tau(L))$ and 
$cr(TH_2(\tilde E))=v=\tau(L)$. Since 
$cr(TH_2(\tilde E))=cr(T_DH_2(\tilde E))$, we have (4). \end{proof}

The following corollary follows directly from \fullref{2.5}.

\begin{Corollary}\label{2.6}
An $F^r_g$--link $L$ has $\beta_*(L)=0$ 
if and only if $\beta_1(L)=0$ and $g=r-1$. 
\end{Corollary}

\section{Characterizing the first Alexander $\Z[\Z]$--modules 
of ribbon surface-links}\label{sec3}

For a finitely generated $\Lambda$--module $M$, let  
$e(M)$ be  the minimal number of $\Lambda$--generators of $M$. The following 
estimate is given by Sekine \cite{Se} and Kawauchi \cite{Ka'} for the case $r=1$ where 
we have $\tau(L)=0$: 

\begin{Lemma}\label{3.1}
If $L$ is a ribbon $F^r_g$--link, then we have 
\[g \geqq e(E^2M(L))+\tau(L).\]
\end{Lemma}

\begin{proof}Since $L$ is a ribbon $F^r_g$--link, 
there is a connected Seifert 
hypersurface $V$ for $L$ such that $H_1(V)$ and $H_1(V,\partial V)$ are 
torsion-free. In fact, we can take $V$ to be a connected sum of 
$r$ handlebodies and some copies, say $n$ copies, of $S^1\times S^2$ (cf \cite{KSS}). Then 
we have $H_1(V)=\Z^{n+g}$ and $H_2(V)=\Z^{n+r-1}$.  
Let $E'$ be the compact 4--manifold obtained from $E$ by splitting it along 
$V$. Let $\tilde V$ and $\tilde E'$ be the  lifts of $V$ and $E'$ by the 
infinite cyclic covering $p\co \tilde E\to E$, respectively. By 
the Mayer-Vietoris exact sequence, we have the 
following exact sequence 
\[ 0\rightarrow B \rightarrow H_1(\tilde V)\rightarrow H_1(\tilde E')
\rightarrow H_1(\tilde E)\rightarrow 0,\]
where $B$ denotes the image of the boundary operator 
$\tilde\partial\co  H_2(\tilde E)\to H_1(\tilde V)$. 
Since $H_1(V)\cong \Z^{n+g}$, we have $H_1(\tilde V)\cong \Lambda^{n+g}$. 
We note that 
$$H_1(E')\cong H_1(S^4-V)\cong H_2(S^4,S^4-V)\cong H^2(V)\cong \Z^{n+r-1},$$
so that $H_1(\tilde E')\cong \Lambda^{n+r-1}$. Using that $\Lambda$ has the 
graded dimension $2$, we see that $B$ must be a free $\Lambda$--module whose 
$\Lambda$--rank is calculated from the exact sequence to be 
\[(n+g)-(n+r-1-\beta_1(L))=g-\tau(L). \] 
Since by definition  
$E^2M(L)=E^2H_1(\tilde E)$ is a quotient $\Lambda$--module of 
$E^0B\cong \Lambda^{g-\tau(L)}$, we have $e(E^2M(L))\leqq g-\tau(L)$. 
\end{proof}

The following theorem is our first theorem, which shows that the estimate of 
\fullref{3.1} is  best possible and generalizes \cite[Theorem 1.1]{Ka'}.

\begin{Theorem}\label{3.2}
A finitely generated $\Lambda$--module 
$M$ is the module $M(L)$ of a ribbon $F^r_g$--link $L$ if and only if 
$M$ is a cokernel-free $\Lambda$--module of corank $r-1$ and 
$g\geqq e(E^2M)+\tau(M)$.
Further, if a non-negative partition $g=g_1+g_2 +...+g_r$ 
is arbitrarily given, then we  can take a ribbon 
$F^r_g$--link $L$ with  $g(F_i) = g_i$ for all  $i$.
\end{Theorem}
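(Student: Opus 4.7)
The \emph{only if} direction combines the preceding lemmas: \fullref{2.4} applied to the infinite cyclic cover $\tilde E\to E$ shows $M(L)$ is cokernel-free, \fullref{2.5}(3) identifies its corank as $r-1$ and shows $\tau(L)=\tau(M(L))$, and \fullref{3.1} gives $g\geq e(E^2M(L))+\tau(M(L))$.

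For the \emph{if} direction I plan a three-step realization. First, given a cokernel-free $\Lambda$-module $M$ of corank $r-1$, I would produce an exact sequence
$$0\longrightarrow B\longrightarrow \Lambda^{n+g_0}\overset{A}{\longrightarrow} \Lambda^{n+r-1}\longrightarrow M\longrightarrow 0$$
mirroring the Mayer--Vietoris sequence used in the proof of \fullref{3.1}, with $g_0=e(E^2M)+\tau(M)$ and $B$ a free $\Lambda$-module of rank $g_0-\tau(M)=e(E^2M)$ whose minimal presentation realizes $E^2M=E^2DM$ (using \fullref{2.2}(5)). This can be achieved by splitting $M$ along $0\to TM\to M\to BM\to 0$: the $BM$ part is handled by the $\tau(M)=cr(TM)$ block, while the $DM$-part contributes the $e(E^2M)$ free generators in $B$.

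Second, I would realize this algebraic data geometrically via a disk-arc presentation: take $n+r$ trivially embedded oriented 2-spheres in $S^3$ and attach $n+g_0$ oriented 1-handles whose core arcs pierce the 2-spheres according to the incidence pattern of $A$, with crossing signs encoding the powers of $t$. The resulting ribbon $F^r_{g_0}$-link $L_0$ admits a Seifert hypersurface of the form (connected sum of $r$ handlebodies) $\#$ $n(S^1\times S^2)$ whose Mayer--Vietoris sequence reproduces the sequence above, so $M(L_0)\cong M$. Third, for $g>g_0$ I would stabilize $L_0$ by attaching $g-g_0$ trivial 1-handles, each introducing a free generator together with a cancelling relation, thereby raising the total genus by $1$ while preserving $M(L)$. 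To realize a prescribed partition $g=g_1+\cdots+g_r$, I would distribute the $g_0$ handles of the minimal realization and the $g-g_0$ stabilizing handles among the components so that component $i$ receives genus $g_i$; since each handle can be placed locally on any chosen component, every non-negative partition is achievable.

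The main obstacle is the first step: producing a sequence whose $B$-term is free of minimal rank $e(E^2M)$. This requires balancing the decomposition $0\to TM\to M\to BM\to 0$ against the duality $E^2M=E^2DM$ of \fullref{2.2}(5), so that the minimal generators of $E^2M$ correspond exactly to the free generators of $B$ and no extra handles are forced into the construction. Executing this economy and verifying that the resulting presentation matrix $A$ admits an oriented disk-arc realization generalizes Kawauchi's knot-case argument in \cite{Ka'} to arbitrary component number, and is the technical heart of the proof.
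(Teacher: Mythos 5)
Your \lq\lq only if\rq\rq\ direction is exactly the paper's (Lemmas \ref{2.4}, \ref{2.5}, \ref{3.1}), and your three-step plan for the converse has the same architecture as the paper's proof (nice presentation of $M$, geometric realization by a disk--arc presentation, stabilization and redistribution of handles for the partition). But the two steps you explicitly defer are precisely the content of the theorem, and as sketched they would not go through. First, the economical resolution is not obtained by splitting along $0\to TM\to M\to BM\to 0$: the functor $E^2$ sees only $DM$ (Proposition \ref{2.2}(5)), and $T_DM=TM/DM$ contributes to $E^1$ but not to $E^2$, so the $TM$/$BM$ splitting gives no control of the rank of the kernel term. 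The paper instead splits along $DM$: it takes a minimal free presentation of the finite module $E^2M$ with $m=e(E^2M)$ generators, extends it to a length-two free resolution, dualizes it (using $E^2E^2M=DM$) to obtain $0\to\Lambda^m\to\Lambda^{m+k}\to\Lambda^k\to DM\to 0$, and splices this with a length-one free resolution $0\to\Lambda^s\to\Lambda^{s+\beta}\to M/DM\to 0$, which exists because $E^2(M/DM)=0$; only in this way does the injective end of the resolution have rank exactly $e(E^2M)$.

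Second, and more seriously, your realization step \lq\lq attach $1$--handles whose core arcs pierce the $2$--spheres according to the incidence pattern of $A$\rq\rq\ is not available for an arbitrary $\Lambda$--presentation matrix. A disk--arc presentation only produces Wirtinger-type relators, i.e.\ conjugation relations $w_j^{-1}x_jw_j=x_0$ or $w_j^{-1}x_hw_j=x_h$, so one must first put the matrix into a form whose columns are Fox derivatives of such relators. The paper does this by normalizing $B(1)$ to a unit-block-plus-zero form, adjoining an extra generator $x_0$ with the row $b_{0j}=-\sum_i b_{ij}$ (so that each column of the enlarged matrix has sum zero, a necessary condition for being a column of Fox derivatives of a word $w_j$ with $\gamma(w_j)=0$), writing $b_{ij}=(t-1)c_{ij}+\delta_{ij}$ (resp.\ $(t-1)c_{ij}$, $(t-1)c_{ij}-1$), and invoking Pizer's algorithm to realize the prescribed derivatives $\gamma^+(\partial w_j/\partial x_i)=c_{ij}$; only then does Yajima's construction give a ribbon link with $\pi_1(S^4\backslash L)=G$ and hence $M(L)=M$. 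This Wirtinger normal form is also what yields the count $g=m+k+s-u=e(E^2M)+\tau(M)$ and, through the free choice of the letter $x_h$ in the self-conjugation relators, the realization of an arbitrary partition $g=g_1+\cdots+g_r$. Without these ingredients your claim that the constructed link has $M(L_0)\cong M$ and total genus $e(E^2M)+\tau(M)$ is unsupported, so the proposal as it stands is an outline rather than a proof.
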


\begin{proof}
The \lq\lq only if\rq\rq part is proved by Lemmas \ref{2.5}
and \ref{3.1}. We show the \lq\lq if\rq\rq part. 
Let $M/(t-1)M\cong \Z^n$. We construct a ribbon 
$F^{n+1}_g$--link $L$ with $M(L)=M$ and $g=e(E^2M)+\tau(M)$ and observe that 
the module $M(L)$ is independent of a choice of the 
partitions $g=g_1+g_2 +...+g_r$ in our construction. 
This will complete the proof, since  an 
$F^{n+1}_{g'}$--link $L'$ with $g'>g$ and $M(L')=M$  can be obtained from 
$L$ by taking suitable connected sums of $L$ with  $g'-g$ trivial 
$F^1_1$--knots. The proof will be done by establishing the following three steps: 

\begin{enumerate}

\item Finding a nice $\Lambda$--presentation matrix $B$ for $M$.

\item Constructing a finitely presented group $G$ and an epimorphism 
$\gamma\co G\to \Z$ which induces a $\Lambda$--isomorphism 
${\rm Ker}\gamma/[{\rm Ker}\gamma,{\rm Ker}\gamma]\cong M$. 

\item Applying T. Yajima's construction to find a ribbon 
$F^r_g$--link $L$ with a prescribed disk--arc presentation such that 
$\pi_1(S^4\backslash L)=G$. 
\end{enumerate}

In (2), recall that ${\rm Ker}\gamma/[{\rm Ker}\gamma,{\rm Ker}\gamma]$ has 
a natural $\Lambda$--module structure with  
the $t$--action meant by the conjugation 
of any element $g\in G$ with $\gamma(g)=1\in \Z$. This $\Lambda$--module is  calculable 
from the group presentation 
of $G$ by the Fox calculus (see Kawauchi \cite{Kaw'} and H Zieschang \cite{Zi}). We shall show how 
to construct a desired Wirtinger presented group $G$ from the $\Lambda$--presentation 
$B$ of $M$ by this inverse process, so that we can establish (3). 
Let $m=e(E^2M)$ and $\beta=\beta(M)$. We take a $\Lambda$--exact sequence
$$0\rightarrow \Lambda^k\rightarrow\Lambda^{m+k}\rightarrow\Lambda^m 
\rightarrow E^2M\rightarrow 0$$ 
for some $k\geqq 0$, which induces a $\Lambda$--exact sequence
\[0\rightarrow \Lambda^m\rightarrow\Lambda^{m+k}\rightarrow\Lambda^k 
\rightarrow E^2E^2M=DM\rightarrow 0.\]
On the other hand, using $D(M/DM)=0$, we have 
$E^2(M/DM)=0$ and hence we have a $\Lambda$--exact sequence
\[0\rightarrow \Lambda^s\rightarrow\Lambda^{s+\beta} \rightarrow M/DM
\rightarrow 0\]
for some $s\geqq 0$. Thus, we have a $\Lambda$--exact sequence
\[0\rightarrow \Lambda^m\rightarrow\Lambda^{m+k+s}\rightarrow
\Lambda^{k+s+\beta}
\rightarrow M\rightarrow 0.\]
Let $B=(b_{ij})$ be a $\Lambda$--matrix of size $(k+s+\beta,m+k+s)$  
representing the $\Lambda$--homomorphism
$\Lambda^{m+k+s} \to \Lambda^{k+s+\beta}$. 
Since $M/(t-1)M=\Z^n$, we can assume 
\[B(1)=\left(
\begin{array}{cc}
              E^u&  O_{12}\\
               O_{21}&  O_{22}
               \end{array}\right)\]              
by base changes of $\Lambda^{m+k+s}$ and $\Lambda^{k+s+\beta}$, 
where $E^u$  is the unit matrix of size $u=k+s+\beta-n$, and $O_{12}, 
O_{21}, O_{22}$ are the zero matrices of sizes $(u,m-\beta+n), 
(n,u), (n,m-\beta+n)$, respectively. 
Let $b_{0j} =-\Sigma_{i=1}^{k+s+\beta} b_{ij}$, and $B^+ =(b_{ij})$  
$(0\leqq i \leqq k+s+\beta,1\leqq j \leqq m+k+s)$ 
We take $c_{ij}\in \Lambda$ so that 
\begin{equation} 
b_{ij}=\begin{cases}
(t-1)c_{ij} & \qquad(j>u)\\
(t-1)c_{ij}+\delta_{ij} &  \qquad(i>0,1\leqq j\leqq u)\\
(t-1)c_{ij}-1 & \qquad(i=0,1\leqq j\leqq u)
\end{cases}
\notag
\end{equation}  
Let $\gamma$ be the epimorphism from the free group 
$G_0=<x_0,x_1,...,x_{k+s+\beta}>$ onto $\Z$ defined by $\gamma(x_i)=1$, 
and $\gamma^+\co \Z[G_0]\to \Z[\Z]=\Lambda$ the group ring 
extension of $\gamma$ with $\gamma^+(x_i)=t$. Using that 
$\Sigma_{i=0}^{k+s+\beta} c_{ij}=0$, an algorithm of A Pizer \cite{Pa} 
enables us to find a word $w_j$ in $G_0$ such that $\gamma(w_j)=0$ and 
the Fox derivative 
\[\gamma^+(\partial w_j/\partial x_i)=c_{ij} (j=1,...,m+k+s)\] 
for every $i$. Let 
\begin{equation} 
R_j=\begin{cases}
x_jw_jx_0^{-1}w_j^{-1}& \qquad(1\leqq j \leqq u)\\
x_hw_jx_h^{-1}w_j^{-1}&  \qquad(u+1\leqq j \leqq m+k+s), 
\end{cases}
\notag
\end{equation}  
where we can take any $h$ for the $x_h$ in every $R_j$ with 
$u+1\leqq j\leqq m+k+s$. 
Then the finitely presented group  
$G=<x_0,x_1,...,x_{k+s+\beta}\mid R_1,R_2,...,R_{m+k+s}>$ 
has the Fox derivative 
$\gamma^+(\partial R_j /\partial x_i)=b_{ij}$ for every $i, j$. 
We note that $G/[G,G]=\Z^{1+k+s+\beta-u}=\Z^{1+n}$. 
Let $\gamma_*\co G \to \Z$ be the epimorphism induced from $\gamma$. 
Then ${\rm Ker}\gamma_*/[{\rm Ker}\gamma_*,{\rm Ker}\gamma_*]\cong M.$ 
By T Yajima's construction in \cite{Ya}, there is a ribbon $F^{n+1}_g$--link 
$L$ with $\pi_1(S^4\backslash L)=G$ (hence $M(L)=M$) so that, 
in terms of a disk--arc presentation of 
a ribbon surface-link, the generators $x_i$
$(i=0,1,...,k+s+\beta)$ correspond to the oriented disks 
$D_i$ $(i=0,1,...,k+s+\beta)$, respectively, and the relation 
$R_j: w_j^{-1} x_jw_j=x_0$ (or  $w_j^{-1} x_hw_j=x_h$, respectively) 
corresponds to an oriented arc $\alpha_j$ which 
starts from a point of $\partial D_j$ 
(or $\partial D_h$, respectively), terminates at a point of 
$\partial D_0$ (or $\partial D_h$, respectively), and is described 
in the following manner: 
When $w_j$ is written as 
$x_{j_1}^{\varepsilon_1}x_{j_2}^{\varepsilon_2}\cdots$ 
($\varepsilon_i=\pm 1$),  the arc  
$\alpha_j$ should be described  
so that it first intersects  the interior of 
the disk $D_{j_1}$ in a point with sign $\varepsilon_1$.   
Next, it intersects the interior of the disk $D_{j_2}$ in a point 
with sign $\varepsilon_2$. This process should be continued 
in the order of the letters $x_{j_i}$ appearing in $w_j$ until they 
are exhausted. Thus, the arc $\alpha_j$ is constructed. 
Then we have 
\[g=m+k+s-u= m+(n-\beta)=e(E^2M)+\tau(M). \]
The arbitrariness of $h$ for the $x_h$ in $R_j$ with 
$u+1\leqq j\leqq m+k+s$ guarantees us 
to construct a 2--manifold $F^{n+1}_g=F^{n+1}_{g_1,g_2,...,g_{n+1}}$ 
corresponding to any partition $g=g_1+g_2+...+g_{n+1}$. \end{proof}

The following corollary comes directly from Lemmas \ref{2.4}, \ref{2.5} and 
\fullref{3.2}. 

\begin{Corollary}\label{3.3}
A finitely generated 
$\Lambda$--module $M$  is a cokernel-free $\Lambda$--module  
of corank  $n$ if and only if there is an $F^{n+1}_g$--link $L$ with 
$M(L)=M$  for some $g$. 
\end{Corollary}

The following corollary gives  a characterization of the modules 
$M(L)$ of ribbon  $F^{n+1}_g$--links 
$L$ with $\beta_*(L)=0$.

\begin{Corollary}\label{3.4}
A   cokernel-free 
$\Lambda$--module $M$ of corank $n$  is the module $M(L)$ 
of a ribbon $F^{n+1}_g$--link $L$ with $\beta_*(L)=0$ 
(in this case, we have necessarily $g=n$)
if and only if  $\beta(M)=0$ and $DM=0$.   
\end{Corollary}

\begin{proof}For the proof of \lq\lq if\rq\rq part, 
we note that $E^2M=E^2DM=0$ and hence $e(E^2M)+\tau(M)=n$. By \fullref{3.2}, 
we have a ribbon $F^{n+1}_n$--link $L$ with $M(L)=M$. Since $\beta(M)=0$, 
we see from \fullref{2.6} that $\beta_*(L)=0$. 
For the proof of \lq\lq only if\rq\rq part, 
we note $g=n$ by \fullref{2.6}. Hence by \fullref{3.1}, 
$n\geqq e(E^2M)+\tau(M)$.  Since $\beta(M)=0$ means $\tau(M)=n$, we have 
$e(E^2M)=0$, so that $E^2M=0$ which is equivalent to $DM=0$. 
\end{proof}

Here are two examples which are not covered by \fullref{3.4}.

\begin{Example}\label{3.5}
For a  cokernel-free 
$\Lambda$--module $M$ of corank $n$ with $\beta(M)=0$ (so that 
$\tau(M)=n$) and $DM=0$, we have the following examples (1) and (2). 

(1)\qua Let 
$M'=M\oplus\Lambda/(t+1,a)$ for an odd $a\geqq 3$. Since 
$E^2M'\cong \Lambda/(t+1,a)\ne 0$, the $\Lambda$--module $M'$ is not 
the module $M(L)$ of a ribbon $F^{n+1}_g$--link $L$ with $\beta_*(L)=0$. 
On the other hand, $\Lambda/(t+1,a)$ is wel-known to be the module of 
a non-ribbon $F^1_0$--knot $K$ (for example, the 2--twist-spun knot of 
the 2--bridge knot of type $(a,1)$) and  $M$ is the module $M(L)$ 
of a ribbon $F^{n+1}_n$--link $L$ with $\beta_*(L)=0$ by \fullref{3.4}. 
Hence $M'$ is the module $M(L')$ of a 
non-ribbon $F^{n+1}_n$--link $L'$ (taking a connected sum $L\# K$) with 
$\beta_*(L')=0$. 

(2)\qua Let $M''=M\oplus\Lambda/(2t-1,a)$ for an odd $a\geqq 5$. 
Although $M''$ is cokernel-free  of corank $n$ and $\beta(M'')=0$, 
we can show that 
$M''$ is not the module $M(L)$  of any $F^{n+1}_g$--link $L$ with 
$\beta_*(L)=0$. To see this, 
suppose $M''=M(L)$ for an $F^{n+1}_g$--link $L$. 
Since $\Lambda/(2t-1,a)$ is 
not $\Lambda$--isomorphic to $\Lambda/(2t^{-1}-1,a)=\Lambda/(t-2,a)$, 
the $\Lambda$--module $DM''=\Lambda/(2t-1,a)$ is 
not $t$--anti isomorphic to the $\Lambda$--module 
$E^2DM''={\rm hom}_\Z(DM'',\Q/\Z)\cong \Lambda/(2t-1,a)$ and hence 
by the second duality of \cite{Ka}  there is a $t$--anti isomorphism 
$$\theta\co  DM''\to E^1BH_2(\tilde E,\partial\tilde E).$$ 
This implies that $\beta_2(L)=\beta(H_2(\tilde E,\partial\tilde E))\ne 0$. 
Thus, $M''$ is not the module $M(L)$  of any 
$F^{n+1}_g$--link $L$ with $\beta_*(L)=0$. 
On the other hand, there is a ribbon 
$F^{n+1}_{n+1}$--link $L''$ with $M(L'')=M''$ by \fullref{3.2}, because  
$e(E^2M'')=e(\Lambda/(2t-1,a))=1$ and hence $e(E^2M'')+\tau(M'')=1+n$. 
In this case,  we have $\beta_2(L'')=2$ by \fullref{2.5}. 
\end{Example}

\section{A characterization of the first Alexander $\Z[\Z]$--modules 
of virtual links}\label{sec4}

\begin{figure}[htbp]
    \centering
	\includegraphics[width=.3\hsize]{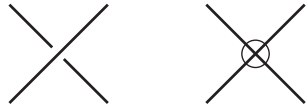}
	\caption{A real or virtual crossing point}
	\label{Realandvirtualcrossings}
\end{figure}
The notion of virtual links was introduced by L\,H Kauffman \cite{Kau}. 
A {\it virtual} $r$--{\it link diagram} is a diagram $D$  of  
immersed oriented  $r$  loops in  $S^2$  with two kinds of 
crossing points  given in \fullref{Realandvirtualcrossings}, where 
the left or right crossing point is called a 
{\it real} or {\it virtual} crossing point, respectively. 
\begin{figure}[htbp]
     \centering
	\includegraphics[width=.8\hsize]{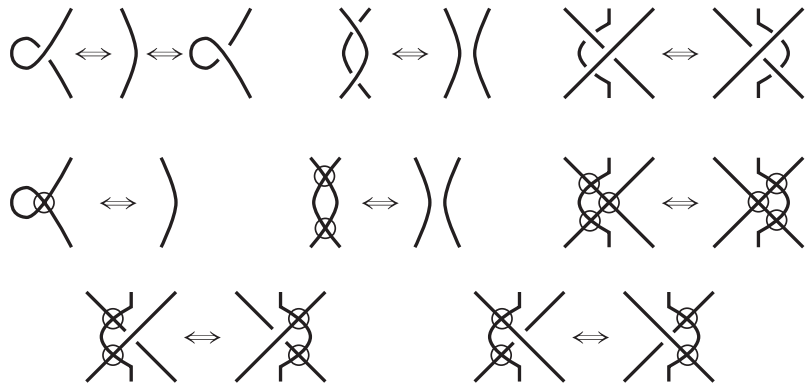}
	\caption{R-moves and Virtual R-moves}
	\label{VirtualR-moves}
\end{figure}
A  {\it virtual} $r$--{\it link} $\ell$ is the equivalence class of virtual 
$r$--link diagrams $D$ under the local moves given in 
\fullref{VirtualR-moves} which are called {\it R-moves} 
for the first three local moves and {\it virtual R-moves} 
for the other local moves. A  virtual $r$--link is called a {\it 
classical} $r$--{\it link} if it is represented by a virtual link diagram 
without virtual crossing points.
The group $\pi(\ell)$ of a virtual 
$r$--link $\ell$ is the group with Wirtinger presentation whose 
generators consist of the edges of a virtual link diagram $D$ of $\ell$   
and whose relations 
are obtained from $D$ as they are indicated in \fullref{Relations}. 
\begin{figure}[htbp]
\labellist\small
\pinlabel $a$ [l] at 63 113
\pinlabel $a$ [l] at 252 113
\pinlabel $d$ [l] at 63 10
\pinlabel $d$ [l] at 252 10
\pinlabel $b$ [b] <1.5pt,0pt> at 9 62
\pinlabel $b$ [b] <2pt, 1pt> at 193 62
\pinlabel $c$ [b] at 110 62
\pinlabel $c$ [b] <0pt, 1pt> at 306 62
\pinlabel $a=d,b=a^{-1}ca$ [t]  <0pt, -3pt> at 61 1
\pinlabel $a=d,b=c$ [t]  <0pt, -3	pt> at 252 1
\endlabellist
        \centering
	\includegraphics[width=.5\hsize]{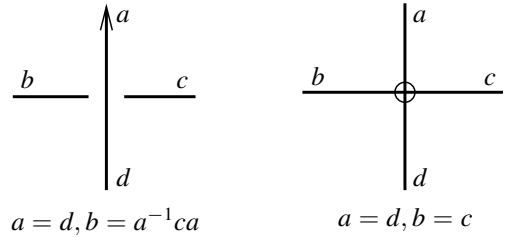}
\vspace{10pt}	
\caption{Relations}
	\label{Relations}
\end{figure}
It is easily checked 
that the Wirtinger group $\pi(\ell)$ up to Tietze equivalences 
is unchanged under  the R-moves and virtual 
R-moves. \fullref{Transforming} defines a map $\sigma'$ 
from a virtual $r$--link diagram to 
a disk--arc presentation of a ribbon  $F^r_{1,1,...,1}$--link. 
\begin{figure}[htbp]
\labellist\small
\pinlabel or at 347 59
\endlabellist
     \centering
	\includegraphics[width=.6\hsize]{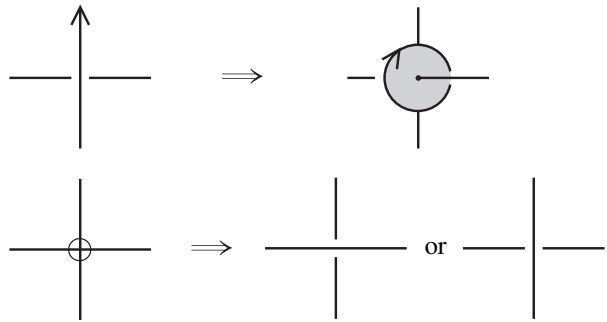}
	\caption{Definition of the map $\sigma'$}
	\label{Transforming}
\end{figure}
S Satoh proved in \cite{Sa} 
that this map $\sigma'$ induces a (non-injective) surjective map 
$\sigma$ from the set of virtual $r$--links onto the set of ribbon  
$F^r_{1,1,...,1}$--links. 
For example,  the map $\sigma$ sends  
a nontrivial virtual knot into a trivial $F^1_1$--knot in 
\fullref{TransformedTrivial}, where 
non-triviality of the virtual knot is shown by the Jones polynomial 
(see \cite{Kau}) and triviality of the  $F^1_1$--knot is shown by 
an argument of \cite{HK} on deforming a 1--handle.  
\begin{figure}[htbp]
     \centering
	\includegraphics[width=.6\hsize]{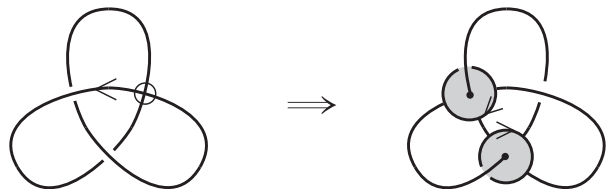}
	\caption{A non-trivial virtual knot sent to 
	the trivial $F^1_1$--knot}
	\label{TransformedTrivial}
\end{figure}
{\it It would be an important problem to find a finite number of local moves 
generating the preimage of} $\sigma$ (see \cite{Sa}). 
T Yajima in \cite{Ya} gives a Wirtinger presentation of 
the  group $\pi_1(S^4\backslash L)$ of a ribbon $F^r_g$--link $L$. 
From an analogy of the constructions,  
we  see that the map $\sigma$ induces 
the same Wirtinger presentation of  
a virtual $r$--link diagram $D$ and  the disk--arc presentation $\sigma'(D)$. 
Thus, we have  the following proposition which has been  independently 
observed by S\,G Kim \cite{Ki}, 
S Satoh \cite{Sa}, and D Silver and S Williams \cite{SW} in the case 
of virtual knots:

\begin{Proposition}\label{4.1}
The set of the groups of virtual $r$--links is the same as 
the set of the 
 groups of ribbon $F^r_{1,1,...,1}$--links. 
\end{Proposition}

For a virtual $r$--link $\ell$, let  $\gamma\co \pi(\ell) \to \Z$ 
be an epimorphism sending every generator of a Wirtinger presentation 
to 1, which is independent of a choice of Wirtinger presentations. 
The {\it first Alexander} $\Z[\Z]$--{\it module}, or simply 
the {\it module}  of a virtual $r$--link $\ell$ is the $\Lambda$--module 
$M(\ell)={\rm Ker}\gamma/[{\rm Ker}\gamma,{\rm Ker}\gamma]$. 
The following corollary comes directly from \fullref{4.1}.

\begin{Corollary}\label{4.2}
The set of the modules of virtual $r$--links 
is the same as  the set of the 
modules of ribbon $F^r_{1,1,...,1}$--links. 
\end{Corollary}

The following theorem giving  a characterization of the modules of virtual 
$r$--links comes directly from \fullref{3.2} and \fullref{4.2}.

\begin{Theorem}\label{4.3}
A finitely generated $\Lambda$--module 
$M$ is the module $M(\ell)$ of a virtual $r$--link $\ell$ 
if and only if $M$ is a  cokernel-free $\Lambda$--module of corank $r-1$   
and has $e(E^2M)\leqq 1+\beta(M)$. 
\end{Theorem}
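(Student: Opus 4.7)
The plan is to derive the theorem as an immediate specialization of \fullref{3.2} via \fullref{4.2}. By \fullref{4.2}, realizing $M$ as $M(\ell)$ for some virtual $r$--link $\ell$ is the same as realizing $M$ as $M(L)$ for some ribbon $F^r_{1,1,\dots,1}$--link $L$, and such an $L$ has total genus exactly $g=r$. So \fullref{4.3} is just the $g=r$, partition $(1,1,\dots,1)$ instance of \fullref{3.2}, and the whole task is to translate the inequality $g\geq e(E^2M)+\tau(M)$ into the form stated here.

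The one arithmetic identity I would record first is that for any cokernel-free $\Lambda$--module $M$ of corank $r-1$, the module invariant $\tau(M)=cr(TM)$ equals $r-1-\beta(M)$. This is the module-level version of the computation made inside the proof of \fullref{2.5}(3): write $TM/(t-1)TM\cong \Z^s$, so that $BM/(t-1)BM$ has $\Z$--rank $r-1-s$ by tensoring the filtration $0\to TM\to M\to BM\to 0$ with $\Lambda_\Q$; but since $BM_\Q\cong \Lambda_\Q^{\beta(M)}$, this rank is also $\beta(M)$, forcing $\tau(M)=s=r-1-\beta(M)$.

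With this identity in hand, both directions are immediate. For the \emph{only if} direction, given $M=M(\ell)$, \fullref{4.2} produces a ribbon $F^r_{1,1,\dots,1}$--link $L$ with $M(L)=M$; the \lq\lq only if\rq\rq\ half of \fullref{3.2} then says $M$ is cokernel-free of corank $r-1$ and $r=g\geq e(E^2M)+\tau(M)=e(E^2M)+(r-1-\beta(M))$, which rearranges to $e(E^2M)\leq 1+\beta(M)$. For the \emph{if} direction, given such an $M$, the inequality $e(E^2M)\leq 1+\beta(M)$ rearranges to $e(E^2M)+\tau(M)\leq r$, so the \lq\lq if\rq\rq\ half of \fullref{3.2}, together with its \lq\lq further\rq\rq\ clause applied to the partition $r=1+1+\dots+1$, supplies a ribbon $F^r_{1,1,\dots,1}$--link $L$ with $M(L)=M$, and a second application of \fullref{4.2} returns a virtual $r$--link $\ell$ with $M(\ell)=M$.

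There is no genuine obstacle here; the deep content has already been absorbed by \fullref{3.2} and \fullref{4.2}. The two small points to take care of are (a) invoking the \lq\lq further\rq\rq\ clause of \fullref{3.2} so that the all-ones partition is actually realizable, and (b) confirming the identity $\tau(M)=r-1-\beta(M)$ at the purely module-theoretic level, so that the inequality $g\geq e(E^2M)+\tau(M)$ with $g=r$ becomes exactly $e(E^2M)\leq 1+\beta(M)$.
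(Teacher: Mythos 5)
Your proof is correct and takes exactly the paper's route: the paper states that \fullref{4.3} ``comes directly from \fullref{3.2} and \fullref{4.2},'' and your argument is precisely that derivation spelled out, with the translation $g=r$, $\tau(M)=r-1-\beta(M)$ turning $g\geqq e(E^2M)+\tau(M)$ into $e(E^2M)\leqq 1+\beta(M)$, and the ``further'' clause of \fullref{3.2} securing the partition $(1,1,\dots,1)$. Nothing is missing.
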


\begin{figure}[htbp]
     \centering
	\includegraphics[clip, height=4cm]{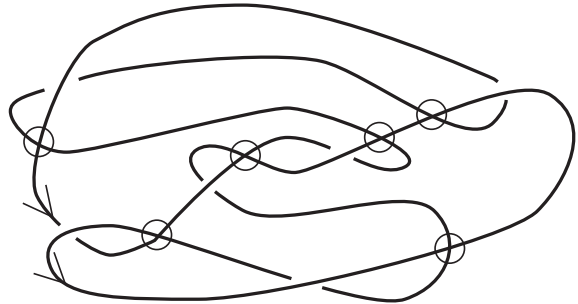}
	\caption{A virtual 2--link sent to the ribbon $F^2_{1,1}$--link in 
                 \fullref{Link}}
	\label{Virtuallink}
\end{figure}
Here is one example.

\begin{Example}\label{4.4}
The ribbon $F^2_{1,1}$--link in 
\fullref{Link} is the $\sigma$--image of a virtual 2--link $\ell$ 
illustrated in \fullref{Virtuallink} with group 
$\pi(\ell)=(x,y\mid x=(yx^{-1}y^{-1})x(yx^{-1}y^{-1})^{-1},
y=(x^{-1}yx^{-1})y(x^{-1}yx^{-1})^{-1})$ 
and module $M(\ell)=\Lambda/((t-1)^2,2(t-1))$. 
Since $DM(\ell)=\Lambda/((t-1),2)\ne 0$, the virtual $2$--link 
$\ell$ is not any classical 2--link. In fact, if $\ell$ is a classical 
link with $M(\ell)$ a torsion $\Lambda$--module, 
then we must have $DM(\ell)=0$ by the second duality of \cite{Ka} 
(cf \cite{Kaw}). It is unknown whether there is a classical link $\ell$ 
such that $t-1\co DM(\ell)\to DM(\ell)$ is not injective (cf \cite{Kaw}), but 
this example means that such a virtual link exists. 
\end{Example}

We see from \fullref{4.3} 
that $M$ is the module of a virtual knot (ie, a virtual 
1--link) if and only if $M$ is a cokernel-free $\Lambda$--module 
of corank 0 and has $e(E^2M)\leqq 1$, for we have 
$\beta(M)=0$ for every cokernel-free $\Lambda$--module of corank $0$. 
For a direct sum 
on the modules of virtual knots, we obtain the following observations.

\begin{Corollary}\label{4.5}\ 

\begin{enumerate}
\item For the  module $M$ of every virtual knot with 
$e(E^2M)=1$, the $n(>1)$--fold direct sum $M^n$ of $M$ 
is a cokernel-free $\Lambda$--module of corank 0, but not the module of 
any virtual knot. 

\item For the module $M$ of every virtual knot and  the module 
$M'$ of a virtual knot with $e(E^2M')=0$, 
the direct sum $M\oplus M'$ is the module of a virtual knot. 
\end{enumerate}
\end{Corollary}

\begin{proof} 
The module $M^n$ is obviously cokernel-free of corank 0. 
Using that $E^2M^n=(E^2M)^n$, we see that 
$e(E^2M^n)\leqq n$. If $E^2M$ has an element of a prime order $p$, then 
we consider the non-trivial $\Lambda_p$--module $(E^2M)_p= E^2M/ pE^2M$, 
where $\Lambda_p=\Z_p[\Z]=\Z_p[t,t^{-1}]$ which is a principal ideal domain. 
Using $e((E^2M)_p)=1$, we have 
$$e(E^2M^n)=e((E^2M)^n)\geqq e(((E^2M)_p)^n)=n$$ 
and hence $e(E^2M^n)=n>1$. 
By \fullref{4.3}, $M^n$ is not the module of any virtual knot, proving (1). 
For (2), the module $M\oplus M'$ is also cokernel-free of corank $0$. 
Since $E^2M'=0$, we have $E^2(M\oplus M')=E^2M$ and by \fullref{4.3} 
$M\oplus M'$ is  the module of a virtual knot, proving (2). 
\end{proof}

\section{A graded structure on the first Alexander $\Z[\Z]$--modules 
of surface-links}\label{sec5}

Let ${\cal A}^r_g$ be the set of 
the modules $M(L)$ of all $F^r_g$--links $L$, 
and ${\cal A}^r[2]=\cup_{g=0}^{+\infty}{\cal A}^r_g$. 
In this section, we show the properness of the inclusions
$${\cal A}^r_0\subset {\cal A}^r_1\subset {\cal A}^r_2\subset\cdots\subset
{\cal A}^r_n\subset\cdots\subset{\cal A}^r[2].$$  
To see this, we establish an estimate of the total genus $g$ by 
the module of a general $F^r_g$--link.  To state this estimate, 
we need some notions on a finite $\Lambda$--module. 
A finite $\Lambda$--module $D$ is {\it symmetric} if there is a $t$--anti 
isomorphism $D\cong E^2D={\rm hom}_\Z(D,\Q/\Z)$, and {\it nearly symmetric} if 
there a $\Lambda$--exact sequence 
\[0\rightarrow D_1 \rightarrow  D\rightarrow  D^* \rightarrow  D_0 
\rightarrow 0\]
such that $D_i(i=0,1)$ are  finite $\Lambda$--modules  with  
$(t-1)D_i=0$  and $D^*$ is a finite symmetric $\Lambda$--module. 
For a general $F^r_g$--link $L$, we shall show the following theorem:

\begin{Theorem}\label{5.1} If  $M$ is the module $M(L)$ of  an 
$F^r_g$--link $L$, then we have  a nearly symmetric finite 
$\Lambda$--submodule 
$D\subset DM$ such that  $g\geqq e(E^2(M/D))/2 +\tau(M)$. 
\end{Theorem}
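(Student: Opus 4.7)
The plan is to generalize the Seifert-hypersurface argument in the proof of \fullref{3.1} by dropping the ribbon hypothesis. I first take any connected Seifert hypersurface $V\subset S^4$ with $\partial V=L$; now $H_1(V)$ need no longer be torsion-free, and the torsion subgroup $T_V\subset H_1(V)$ will supply the candidate submodule $D$. With $E'$ the 4--manifold obtained by splitting $E$ along $V$, and $\tilde V,\tilde E'$ the lifts in the infinite cyclic cover $p\co\tilde E\to E$, the Mayer--Vietoris sequence still yields the exact sequence
\[
0\to B\to H_1(\tilde V)\to H_1(\tilde E')\to M\to 0,
\]
with $B=\text{im}[\tilde\partial\co H_2(\tilde E)\to H_1(\tilde V)]$. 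An Euler-characteristic count using $\chi(V)=r-g$ shows $B$ still has $\Lambda$--rank $g-\tau(L)$, but now $H_1(\tilde V)$ and $H_1(\tilde E')$ acquire genuine finite $\Lambda$--submodules traceable to $T_V$.

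I would then define $D\subset DM$ as the image in $M$ of the finite $\Lambda$--part of $H_1(\tilde E')$ coming from $T_V$. To establish near-symmetry of $D$, I invoke Poincar\'e--Lefschetz duality on $V$: the torsion linking form provides a canonical non-degenerate pairing between $T_V$ and $\text{Tors}\,H_1(V,\partial V)$. Lifting this pairing to $\tilde V$ via the Wang sequence and combining with the $\Lambda$--module dualities of \cite{Ka} on $\tilde E$, one obtains a $t$--anti isomorphism from $D$ to $E^2D$ modulo $(t-1)$--annihilated subquotients, which yields the exact sequence
\[
0\to D_1\to D\to D^*\to D_0\to 0
\]
with $(t-1)D_i=0$ and $D^*$ a symmetric finite $\Lambda$--module, realizing the near-symmetry of $D$.

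For the genus bound, modding out by $D$ removes the torsion contribution of $V$, reducing the problem to a rank count on $\Lambda$--free quotients. Unlike the ribbon setup, here both $H_1(\tilde V)$ and $H_2(\tilde V)$ carry non-trivial $\Lambda$--free parts (linked by Poincar\'e duality on $V$), and the long exact sequence for $E^q$ applied to the Mayer--Vietoris sequence propagates relations through $E^0,E^1,$ and $E^2$ twice over. Consequently $E^2(M/D)$ is presented as a quotient of a $\Lambda$--module of rank $2(g-\tau(L))$, doubling the ribbon estimate of \fullref{3.1} and giving $e(E^2(M/D))\leqq 2(g-\tau(L))$, equivalent to $g\geqq e(E^2(M/D))/2+\tau(M)$.

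The principal obstacle will be verifying near-symmetry of $D$: reconciling Poincar\'e--Lefschetz duality on the 3--manifold $V$ with the $\Lambda$--module dualities of \cite{Ka} on the 4--manifold $\tilde E$ through the Mayer--Vietoris sequence. This mediation is precisely where the $(t-1)$--annihilated subquotients $D_0,D_1$ arise, forcing the weaker \emph{nearly symmetric} conclusion rather than genuine symmetry of $D$, and careful tracking of finite $\Lambda$--parts through all four terms of the Mayer--Vietoris sequence is needed to pin down the structure of the symmetric quotient $D^*$.
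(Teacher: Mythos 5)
Your construction collapses at the definition of $D$. Because a Seifert hypersurface $V$ lifts to $\tilde E$, the preimages of $V$ and of $E'$ are disjoint unions of $t$--translates of copies of $V$ and $E'$, so $H_1(\tilde V)\cong\Lambda\otimes_\Z H_1(V)$ and $H_1(\tilde E')\cong\Lambda\otimes_\Z H_1(E')$. For a finitely generated abelian group $A$, the module $\Lambda\otimes_\Z A$ is a direct sum of copies of $\Lambda$ and of $\Lambda/m\Lambda$, and it has \emph{no} non-zero finite $\Lambda$--submodule (any non-zero element of $\Lambda/m\Lambda$ generates an infinite submodule, since $\Lambda/p\Lambda$ is a domain). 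So the ``finite $\Lambda$--part of $H_1(\tilde E')$ coming from $T_V$'' is zero and your $D$ is forced to be $0$; but with $D=0$ the inequality $g\geqq e(E^2M)/2+\tau(M)$ is simply false, eg for the $2$--twist-spun $2$--bridge knot of type $(a,1)$, where $g=\tau=0$ and $M=\Lambda/(t+1,a)$ gives $e(E^2M)=1$. Moreover $DM$ is not ``traceable to $T_V$'' at all: a ribbon $F^1_1$--knot with $M=\Lambda/(2t-1,k)$ has $DM\ne 0$ while bounding a hypersurface with torsion-free $H_1$ (a connected sum of a handlebody with copies of $S^1\times S^2$), so the torsion linking form of $V$ cannot be the source of the required (near-)symmetry. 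Finally, the factor $2$ is asserted, not derived: in your Mayer--Vietoris sequence the rank count still gives $\beta(B)=g-\tau(L)$ exactly as in \fullref{3.1}, and the Ext-chase only shows $E^2M$ is a quotient of $E^1C$ (with $C$ the image of $H_1(\tilde V)$ in $H_1(\tilde E')$), squeezed between a quotient of $\Lambda^{g-\tau(L)}$ and a subquotient of $E^1(\Lambda\otimes T_V)$ --- nothing of the stated form.

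The paper's proof uses a genuinely different construction which supplies exactly the two ingredients you are missing. It caps off $E$ to a closed $4$--manifold $X=E\cup\bigl(\cup_{i=1}^r V_i\times S^1\bigr)$ with $V_i$ handlebodies, extends the infinite cyclic covering, and compares $H_1(\tilde E)$ with $H_1(\tilde X)$: since $H_*(\tilde X,\tilde E)\cong\oplus_i H_*((V_i,\partial V_i)\times R^1)$ is annihilated by $t-1$, the maximal finite submodules differ only by modules $D_0,D_1$ with $(t-1)D_i=0$ --- this, and not any mediation with Poincar\'e--Lefschetz duality on $V$, is where ``nearly symmetric'' comes from. Then the second duality of \cite{Ka}, applied to the \emph{closed} manifold covering $\tilde X$, gives a $t$--anti epimorphism $DH_1(\tilde X)\to E^1BH_2(\tilde X)$ with symmetric kernel $D^*$, and $e(E^2E^1BH_2(\tilde X))\leqq\beta BH_2(\tilde X)=\beta_2(L)=2(g-\tau(L))$ by \fullref{2.2} and \fullref{2.5}; pulling $D^*$ back to $DM(L)$ yields $D$ and the bound. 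Any Seifert-hypersurface route would have to import this duality input in some form; as written, your argument establishes neither the near-symmetry of a suitable $D$ nor the factor $2$.
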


\begin{proof} Let $F^r_g=F^r_{g_1,g_2,...,g_r}$. Let $L_i$ be 
the $F^1_{g_i}$--component of $L$, and $\partial_i E$  the component 
of the boundary $\partial E$ corresponding to $L_i$. 
We parametrize $\partial_i E$ as $L_i\times S^1$ so that 
the natural composite 
\[ H_1(L_i\times 1)\rightarrow H_1(\partial_i E)\rightarrow 
H_1(E)\overset {\gamma}{\rightarrow} \Z\]
is trivial. Let $V_i$ be the handlebody of genus $g_i$. We construct a 
closed connected oriented 4--manifold $X=E\cup(\cup_{i=1}^r V_i\times S^1)$ 
obtained by pasting $\partial_i E$ to 
$L_i\times S^1=(\partial V_i)\times S^1$. Then the infinite cyclic covering 
$p\co \tilde E\to E$ associated with $\gamma$ extends to an infinite cyclic covering $p_X\co \tilde X\to X$, so that $(p_X)^{-1}(V_i\times S^1)=V_i\times R^1$. 
Since 
$H_*(\tilde X,\tilde E)\cong \oplus_{i=1}^rH_*((V_i,\partial V_i)\times R^1)$,
the exact sequence of the pair $(\tilde X,\tilde E)$ induces 
a $\Lambda$--exact sequence 
\[ 0\rightarrow T_1\rightarrow H_1(\tilde E)\overset{i_*}{\rightarrow}  
H_1(\tilde X)\rightarrow 0\]
where $(t-1)T_1=0.$ This exact sequence induces a $\Lambda$--exact sequence 
\begin{equation}0\rightarrow D_1 \rightarrow 
DH_1(\tilde E)\overset{i^D_*}{\rightarrow}  
DH_1(\tilde X)\rightarrow D_0 \rightarrow 0\tag{5.1.1}\label{5.1.1}
\end{equation}
for some finite $\Lambda$--modules $D_i(i=0,1)$ with $(t-1)D_i=0$.

To see \eqref{5.1.1}, it suffices to prove that 
the cokernel $D_0$ of the natural homomorphism 
$i^D_*\co DH_1(\tilde E)\to  DH_1(\tilde X)$ has $(t-1)D_0=0$. 
For an element $x\in DH_1(\tilde X)$, we take an element 
$x'\in H_1(\tilde E)$ with $i_*(x')=x$. Since there is a positive integer 
$n$ such that $(t^n-1)x=0$, the element $(t^n-1)x'\in H_1(\tilde E)$ is the 
image of an element in $T_1$. Hence  $(t^n-1)(t-1)x'=0$.  Also, 
since there is a positive integer $m$ such that $mx=0$, we also see that 
$m(t-1)x'=0$, so that $(t-1)x'$ is in $DH_1(\tilde E)$ and 
$i^D_*((t-1)x')=(t-1)x$. This means $(t-1)D_0=0$, showing \eqref{5.1.1}.

By the second duality in \cite{Ka}, 
there is a natural $t$--anti epimorphism 
$\theta\co DH_1(\tilde X) \to E^1BH_2(\tilde X)$ 
whose kernel $D^*=DH_1(\tilde X)^{\theta}$ is symmetric. Then 
\[ e(E^2(DH_1(\tilde X)/D^*))=e(E^2E^1BH_2(\tilde X))\leqq 
\beta BH_2(\tilde X),\]
where the later inequality is obtained by using \fullref{2.2}. 
Since $H_*(\tilde X,\tilde E)$ is $\Lambda$--torsion, we see from 
\fullref{2.5} that  
\[\beta BH_2(\tilde X)=\beta_2(L)=2(g-\tau(L)).\]
In \eqref{5.1.1}, the $\Lambda$--submodule 
$D=(i^D_*)^{-1}(D^*)\subset DH_1(\tilde E)=DM(L)$ 
induces a $\Lambda$--exact sequence 
$0\rightarrow D_1 \rightarrow D\rightarrow  
D^*\rightarrow D'_0 \rightarrow 0$ for a finite $\Lambda$--module 
$D'_0$ with $(t-1)D'_0=0$, so that $D$ is nearly symmetric. 
Using that $i^D_*$ induces a $\Lambda$--monomorphism 
$DM(L)/D\to DH_1(\tilde X)/D^*$, we see that there is a $\Lambda$--epimorphism 
$E^2(DH_1(\tilde X)/D^*)\to E^2(DM(L)/D)$, so that 
\[e(E^2(DM(L)/D))\leqq e(E^2(DH_1(\tilde X)/D^*))\leqq 2(g-\tau(L)).\]
Thus, we have $g\geqq e(E^2(DM(L)/D))/2+\tau(L)$. \end{proof}

For an application of this theorem, it is useful to note that 
every finite $\Lambda$--module $D$ has a unique splitting $D_{t-1}\oplus D_c$ 
(see \cite[Lemma 2.7]{Ka''}), where $D_{t-1}$ is the $\Lambda$--submodule 
consisting of an element annihilated by the multiplication of 
some power of $t-1$ and 
$D_c$ is a cokernel-free $\Lambda$--submodule of corank $0$.  
As a direct consequence of this property, 
we see that if $D$ is nearly symmetric, then $D_c$ is symmetric. 
Then we can obtain the following result from \fullref{5.1}.

\begin{Corollary}\label{5.2} 
For every $r\geqq 1$, we have 
$${\cal A}^r_0 \subsetneqq {\cal A}^r_1\subsetneqq {\cal A}^r_2 
\subsetneqq {\cal A}^r_3\subsetneqq \cdots 
\subsetneqq{\cal A}^r_n\subsetneqq\cdots\subsetneqq {\cal A}^r[2]$$
and the set ${\cal A}^r[2]$ is equal to 
the set of finitely generated cokernel-free $\Lambda$--modules of corank 
$r-1$, so that ${\cal A}^r[2]\cap{\cal A}^{r'}[2]=\emptyset$ if $r\ne r'$. 
\end{Corollary}

\begin{proof}  
We have ${\cal A}^r_g \subset {\cal A}^r_{g+1}$ for every $g$ by a connected 
sum of a trivial $F^1_1$--knot. 
Let $L_0$ be a trivial $F^r_0$--link whose module $M(L_0)=\Lambda^{r-1}$. 
Let $K$ be a ribbon $F^1_1$--knot with $M(K)=\Lambda/(2t-1,k)$ for a prime 
$k\geqq 5$. This existence is given by \fullref{3.2}.  
For every positive integer $n$, let $L_n$ be an 
$F^r_n$--link obtained by a connected sum of $L_0$ and $n$ copies of $K$, and 
$M_n=\Lambda^{r-1}\oplus (\Lambda/(2t-1,k))^n$. Then we have 
$M(L_n)=M_n$. We show that if $M_n$ is the module of an $F^r_g$--link $L$, 
then $g\geqq n/2$. To see this, we note that $\tau(M_n)=0$, 
$DM_n=(\Lambda/(2t-1,k))^n=(DM_n)_c$ 
does not admit any non-trivial symmetric submodule, and 
$e(E^2M_n)=n$. Hence $g\geqq e(E^2M_n)/2 +\tau(M_n)=n/2$ by \fullref{5.1}. 
This means that 
among the modules $M_n (g+1\leqq n\leqq 2g+1)$ there is a member 
$M_n$ in ${\cal A}^r_{g+1}$ but not in ${\cal A}^r_{g}$. In fact, if 
$M_{g+1}\not\in {\cal A}^r_{g}$, then $M_{g+1}$ is a desired member. If 
$M_{g+1}\in {\cal A}^r_{g}$, then we take the largest $n(\geqq g+1)$ 
such that $M_n\in {\cal A}^r_{g}$.  
Since $M_{2g+1}\not\in {\cal A}^r_{g}$, we have 
$n<2g+1$. Let $L'$ be an $F^r_g$--link with 
$M(L')=M_n$, and $L''$ an $F^r_{g+1}$--knot which is a connected sum 
of $L'$ and $K$. Then $M_{n+1}=M(L'')$ is in ${\cal A}^r_{g+1}$ 
but not in ${\cal A}^r_{g}$. 
The characterization of ${\cal A}^r[2]$ follows directly 
from \fullref{3.3}, so that if $r\ne r'$, then 
${\cal A}^r[2]\cap{\cal A}^{r'}[2]=\emptyset$. 
\end{proof}

\section[A graded structure on the first Alexander {Z[Z]}-modules 
of classical links, surface-links and higher-dimensional 
manifold-links]{A graded structure on the first Alexander $\Z[\Z]$--modules 
of classical links, surface-links and higher-dimensional\newline 
manifold-links}\label{sec6}

An $n$--{\it dimensional  manifold-link with} $r$ {\it components} is 
the ambient isotopy class of 
a closed oriented $n$--manifold with $r$ components 
embedded in the $(n+2)$--sphere $S^{n+2}$ by a locally-flat embedding. 
A $1$--dimensional manifold-link with $r$ components coincides with 
a classical $r$--link even when we regard it as a virtual link by a result of  
M Goussarov, M Polyak and O Viro \cite{Go}. 
Let $E_Y=S^{n+2}\backslash {\rm int}N(Y)$ for a tubular neighborhood 
$N(Y)$ of $Y$ in $S^{n+2}$. Since 
$H_1(E_Y)\cong \Z^r$ has a unique oriented meridian basis, 
we have a unique infinite cyclic covering $p\co \tilde E_Y\to E_Y$ associated 
with the epimorphism $\gamma\co H_1(E_Y)\to \Z$ sending every oriented 
meridian  to $1$. The {\it first Alexander} $\Z[\Z]$--{\it module}, 
or simply the {\it module} of the manifold-link $Y$ is the $\Lambda$--module 
$M(Y)=H_1(\tilde E_Y)$. Let ${\cal A}^r[n]$ denote the set of the modules of 
$n$--dimensional manifold-links with $r$ components by generalizing 
the case $n=2$. Let $R{\cal A}^r_g$ be the set of the modules of ribbon 
$F^r_g$--links. By \fullref{3.2} and \fullref{3.3}, we have 
${\cal A}^r[2]=\cup_{g=0}^{+\infty} R{\cal A}^r_g$. 
Let $V{\cal A}^r[1]$ denote the set of the modules of virtual  
$r$--links. By \fullref{3.2} and  \fullref{4.2}, we have 
$V{\cal A}^r[1]=R{\cal A}^r_r$. 
For the set 
${\cal A}^r[1]$,  we further consider the subset 
${\cal A}^r_g[1]={\cal A}^r[1]\cap{\cal A}^r_g$. We have 
${\cal A}^r_g[1]\subset{\cal A}^r_{g+1}[1]\subset{\cal A}^r[1]$ 
for every $g\geqq 0$. 
Taking a split union of classical knots with non-trivial Alexander 
polynomials, we see that  the set ${\cal A}^r_0[1]$ is infinite. 
We have the following comparison theorem on 
the modules of classical $r$--links, $F^r_g$--links and 
higher-dimensional manifold-links with $r$ components, which explains why we 
consider the strictly nested class of classical and surface-links for 
the classification problem of the Alexander modules of general manifold-links.

\begin{Theorem}\label{6.1} 
\begin{eqnarray*}
{\cal A}^r_0[1]\subsetneqq{\cal A}^r_1[1]\subsetneqq&&\cdots\quad\subsetneqq
{\cal A}^r_{r-1}[1]={\cal A}^r[1]\subsetneqq
R{\cal A}^r_{r-1}\subsetneqq R{\cal A}^r_r=V{\cal A}^r[1]\\
&&\subsetneqq{\cal A}^r_r\subsetneqq
\cdots\subsetneqq{\cal A}^r_n\subsetneqq\cdots\subsetneqq{\cal A}^r[2]
={\cal A}^r[3]={\cal A}^r[4]=\cdots.
\end{eqnarray*}
\end{Theorem}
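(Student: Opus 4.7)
The chain assembles from pieces most of which are already established; I outline each in turn and then identify the main technical point.

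The segment ${\cal A}^r_r\subsetneqq\cdots\subsetneqq{\cal A}^r_n\subsetneqq\cdots\subsetneqq{\cal A}^r[2]$ is exactly \fullref{5.2}, which also identifies ${\cal A}^r[2]$ with the full class of finitely generated cokernel-free $\Lambda$--modules of corank $r-1$. For the equalities ${\cal A}^r[2]={\cal A}^r[n]$ with $n\geq 3$, any $n$--dimensional manifold-link $Y\subset S^{n+2}$ with $r$ components has $H_1(E_Y)\cong\Z^r$ by Alexander duality, so \fullref{2.4} forces $M(Y)$ to be cokernel-free of corank $r-1$ and hence $M(Y)\in{\cal A}^r[2]$; the reverse inclusion is iterated Artin spinning, which preserves the fundamental group of the exterior (and therefore the first Alexander $\Lambda$--module) while raising the dimension by one.

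For the ribbon and virtual block, $R{\cal A}^r_r=V{\cal A}^r[1]$ is \fullref{4.2}. \fullref{3.2} characterizes $R{\cal A}^r_g$ as the cokernel-free $\Lambda$--modules $M$ of corank $r-1$ with $e(E^2M)+\tau(M)\leq g$, so the module $M=\Lambda^{r-1}\oplus(\Lambda/(2t-1,k))^r$ with $k\geq 5$ prime (and $r\geq 2$) --- which has $\tau(M)=0$ and $e(E^2M)=r$ --- lies in $R{\cal A}^r_r\setminus R{\cal A}^r_{r-1}$. For $V{\cal A}^r[1]\subsetneqq{\cal A}^r_r$, I would connect-sum $r+1$ copies of the 2--twist-spun 2--bridge knot of type $(a,1)$ (with $a\geq 3$ odd) into a trivial $F^r_r$--link as in \fullref{3.5}(1); the resulting $F^r_r$--link has module $\Lambda^{r-1}\oplus(\Lambda/(t+1,a))^{r+1}$ with $e(E^2M)+\tau(M)=r+1>r$ and so cannot be ribbon at genus $r$.

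The classical block is the most substantive. For the terminal equality ${\cal A}^r_{r-1}[1]={\cal A}^r[1]$ I would use a \emph{one-arc spin}: given a classical $r$--link $L\subset S^3$, arrange $L$ in $S^3=B^3\cup B^3{}'$ so that one chosen component crosses $\partial B^3$ in two points and is unknotted in $B^3{}'$, while the remaining $r-1$ components lie in the interior of $B^3$; then Artin-spin $B^3$ around $\partial B^3=S^2$ inside $S^4$. The spun link is an $F^r_{0,1,1,\ldots,1}$--link of total genus $r-1$, and a van~Kampen calculation (the amalgamating group $\pi_1(\partial B^3\setminus\{2\text{ pts}\})\cong\Z$ coincides with $\pi_1(B^3{}'\setminus\text{arc})\cong\Z$) shows its exterior fundamental group equals $\pi_1(S^3\setminus L)$, so the Alexander module is preserved. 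For the strict inclusions ${\cal A}^r_g[1]\subsetneqq{\cal A}^r_{g+1}[1]$ with $0\leq g\leq r-2$, \fullref{5.1} supplies the obstruction: the split union $L_g$ of a $(g+2)$--component Hopf chain link (whose Alexander module is $\Lambda$--torsion with trivial finite part) and $r-g-2$ trivial unknotted circles satisfies $\tau(M(L_g))=g+1$ and $DM(L_g)=0$, so $M(L_g)\notin{\cal A}^r_g$, while \fullref{3.2} realizes $M(L_g)$ as a ribbon $F^r_{g+1}$--link. Finally, ${\cal A}^r[1]\subsetneqq R{\cal A}^r_{r-1}$ is witnessed for $r\geq 2$ by the ribbon module $\Lambda^{r-1}\oplus\Lambda/(2t-1,k)$, whose finite part fails the $t$--anti symmetry required of classical-link modules by the second duality of \cite{Ka} (cf.\ \fullref{4.4}); for $r=1$ one uses a ribbon 2--knot whose module is not realizable classically.

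The main technical obstacle is the one-arc spin, which simultaneously pins down the sharp total genus $r-1$ at which every classical $r$--link module arises and matches the lower bound $g'\geq\tau(M)$ coming from \fullref{5.1}; the count is exactly $r-1$ because precisely one component can be ``opened'' along $\partial B^3$ without disturbing the van~Kampen amalgamation used to equate the fundamental groups.
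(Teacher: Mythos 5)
Most of your chain is assembled exactly as in the paper: the upper block is Corollary 5.2, the equalities ${\cal A}^r[2]={\cal A}^r[3]=\cdots$ and the passage from classical links to surface-links are done by the same spinning construction the paper uses, the strictness inside the classical block via a classical link whose module has torsion-corank $g+1$ (your Hopf-chain computation $(\Lambda/(t-1))^{g+1}\oplus\Lambda^{r-g-2}$ is correct, and bounding $s\geqq\tau$ by Theorem 5.1 is equivalent to the paper's use of $\beta_2(L')=2(s-\tau)\geqq 0$ from Lemma 2.5), and the ribbon/virtual strictness via Theorem 3.2 with modules of type $\Lambda/(2t-1,k)$ and $\Lambda/(t+1,a)$ is sound.

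The genuine gap is your witness for ${\cal A}^r[1]\subsetneqq R{\cal A}^r_{r-1}$ when $r\geqq 2$. You take $M=\Lambda^{r-1}\oplus\Lambda/(2t-1,k)$ and assert that its finite part violates a $t$--anti symmetry ``required of classical-link modules by the second duality''. No such requirement is available here: the conclusion of Example 4.4 ($DM(\ell)=0$, and more generally symmetry of the finite part) is drawn from the second duality of \cite{Ka} only under the hypothesis that $M(\ell)$ is a \emph{torsion} $\Lambda$--module, whereas your $M$ has $\beta(M)=r-1>0$. When the free rank is positive, the second duality only yields a $t$--anti epimorphism of $DM$ onto a module of the form $E^1B(\,\cdot\,)$ with symmetric kernel --- exactly the mechanism Example 3.5(2) exploits to produce surface-links with non-symmetric finite part once $\beta_2\ne 0$ --- so no symmetry of $DM(\ell)$ follows for classical $r$--links with $\beta_1=r-1>0$, and your obstruction is unproved (and possibly false; the paper itself flags how little is known about $DM(\ell)$ for classical links). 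The symmetry of the torsion Alexander polynomial from \cite{Ka'''} does not rescue this module either, since the finite module $\Lambda/(2t-1,k)$ has trivial order (the gcd of $2t-1$ and $k$ is $1$), hence a trivially symmetric torsion polynomial. The paper's witness avoids this: it uses, in effect, a module whose \emph{torsion part has non-symmetric order} (a ribbon $S^2$--knot with Alexander polynomial such as $2t-1$, padded to corank $r-1$), which lies in $R{\cal A}^r_{r-1}$ by Theorem 3.2 (here $E^2M=0$ and $\tau(M)=0$) but is excluded from ${\cal A}^r[1]$ by the symmetry theorem of \cite{Ka'''}; you should replace your witness by this one. A smaller omission in the same link of the chain: for the symbol ${\cal A}^r[1]\subsetneqq R{\cal A}^r_{r-1}$ you also need the inclusion ${\cal A}^r[1]\subset R{\cal A}^r_{r-1}$, i.e.\ that your one-arc spun $F^r_{0,1,\dots,1}$--link is a \emph{ribbon} surface-link, not merely a genus $r-1$ surface-link; the paper obtains this from the spinning construction, and you should state it explicitly.
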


\begin{proof} 
By \fullref{2.4} and \fullref{3.3}, we have 
${\cal A}^r[2]\supset {\cal A}^r[n]$ for every $n\geqq 1$. 
To see that ${\cal A}^r[n]\subset {\cal A}^r[n+1]$, we use a spinning 
construction. To explain it, 
let $M(Y)\in {\cal A}^r[n]$ for a manifold-link $Y$. We choose an 
$(n+2)$--ball $B^{n+2}_o\subset S^{n+2}$ such that the pair 
$(B^{n+2}_o, Y_o)$ $(Y_o=Y\cap B^{n+2}_o)$ is homeomorphic 
to the standard disk pair $(D^2\times D^n,0\times D^n)$, 
where $D^n$ denotes the $n$--disk and 
$o$ denotes the origin of the 2--disk $D^2$.  Let 
$B^{n+2}=\text{cl}(S^{n+2}\backslash B^{n+2}_o)$ and 
$Y'=\text{cl}(Y\backslash Y_o)$. 
We construct an $(n+1)$--dimensional manifold link $Y^+\subset S^{n+3}$ by
$$ Y^+=Y'\times S^1\cup (\partial Y')\times D^2 \subset 
B^{n+2}\times S^1\cup (\partial B^{n+2})\times D^2=S^{n+3}.$$
Then the fundamental groups $\pi_1(E_Y)$ and $\pi_1(E_{Y^+})$ 
are meridian-preservingly isomorphic by van Kampen theorem and hence 
$M(Y)=M(Y^+)$. This implies that  
${\cal A}^r[1]\subset R{\cal A}^r_{r-1}$ and 
${\cal A}^r[2]={\cal A}^r[3]={\cal A}^r[4]=\cdots$.  
Let $g$ be an integer with $0<g\leqq r-1$. 
Let $\ell$ be a classical $(g+1)$--link with  
$M(\ell)$ a torsion $\Lambda$--module. 
Then $M(\ell)=M(L)$ for a ribbon $F^{g+1}_g$--link $L$ 
by the spinning construction. 
The $\Lambda$--module  $M'=M(\ell)\oplus \Lambda^{r-1-g}$ is 
in ${\cal A}^r[1]$ as the module of a split union $\ell^+$ of $\ell$ 
and a trivial $(r-1-g)$--link and in $R{\cal A}^r_g\subset {\cal A}^r_g$ 
as the module of a split union $L^+$ of $L$ and a trivial 
$F^{r-1-g}_0$--link. Hence $M'$ is in 
${\cal A}^r_g[1]$. If $M'=M(L')$ for  an $F^r_s$--link $L'$, then 
we have $\tau(L')=(r-1)-(r-1-g)=g$ and by \fullref{2.5} 
$\beta_2(L')=2(s-\tau(L'))=2(s-g)\geqq 0$. Hence $s\geqq g$. Thus, 
$M'$ is not in ${\cal A}^r_{g-1}$. This shows that 
${\cal A}^r_{g-1}[1]\subsetneqq{\cal A}^r_g[1]$ and  
$R{\cal A}^r_{g-1}\subsetneqq R{\cal A}^r_g$. This last proper inclusion 
also holds for every $g\geqq r$. In fact, by 
taking $M=(\Lambda/(t-1))^{r-1}\oplus 
(\Lambda/(t+1,a))^{g-r+1}$ for an odd $a\geqq 3$, we have 
$(E^2M)+\tau(M)=(g-r+1)+(r-1)=g.$ 
Since $M$ is cokernel-free and $cr(M)=r-1$, we have 
$M\in R{\cal A}^r_g\backslash R{\cal A}^r_{g-1}$ by \fullref{3.2}. 
Next, let $M=M(L)\in R{\cal A}^r_g$  
have $(E^2M)+\tau(M)=g$ and $p DM=0$ for an odd prime $p$. 
Let $K$ be an $S^2$--knot with $M(K)=\Lambda/(t+1,p)$
(see \fullref{3.5} (1)). Then we have $M'=M\oplus \Lambda/(t+1,p)=
M(L\# K)\in {\cal A}^r_g$ for a connected sum $L\# K$ of $L$ and $K$. 
Then we have $(E^2M')+\tau(M')=g+1$ and $M'\not\in R{\cal A}^r_g$ 
by \fullref{3.2}. Thus, $R{\cal A}^r_g\subsetneqq {\cal A}^r_g$ for every $g$. 
The properness of ${\cal A}[1]\subsetneqq R{\cal A}^r_{r-1}$ follows by a 
reason that the torsion Alexander polynomial of every classical $r$--link 
in \cite{Ka'''} is symmmetric, but there is a ribbon $S^2$--knot with 
non-symmetric Alexander polynomial (see \cite{Ka''''} for the detail). 
\end{proof}

On the inclusion ${\cal A}^r[1]\subset{\cal A}^r[2]$, we note that 
the invariant $\kappa_1(\ell)$ in \cite{Ka'''} is equal to  
the torsion-corank $\tau(L)$ 
for every  classical $r$--link $\ell$ and  every $F^r_g$--link $L$ with 
$M(\ell)=M(L)$.  

\bibliographystyle{gtart}
\bibliography{link}

\end{document}